\numberwithin{equation}{section}
\newtheorem{theo}{Theorem}[section]
\newtheorem{lemm}{Lemma}[section]
\newtheorem{coro}{Corollary}[section]
\def\begeq{\begin{equation}}
\def\endeq{\end{equation}}
\def\lf{\left}
\def\ri{\right}
\begin{document}

\title{An existence and uniqueness result for orientation-reversing harmonic diffeomorphism from $\mathbb{H}_*^n$ to $\mathbb{R}_*^n$}
\author{Shi-Zhong Du$^\dagger $ \&   Xu-Qian Fan$^*$}
\thanks{$^\dagger$ Research partially supported by the National
Natural Science Foundation of China (11101106).
}
\address{The School of Natural Sciences and Humanities,
            Shenzhen Graduate School, The Harbin Institute of Technology, Shenzhen, 518055, P. R. China.}
\email{szdu@hitsz.edu.cn}

\thanks{$^*$ Research partially supported by the National
Natural Science Foundation of China (11291240139).
}
\address{Department of
Mathematics, Jinan University, Guangzhou,
 510632,
P. R. China.}
\email{txqfan@jnu.edu.cn}

\renewcommand{\subjclassname}{%
  \textup{2000} Mathematics Subject Classification}
\subjclass[2000]{Primary 58E20; Secondary 34B15}
\date{September 2014}
\keywords{Harmonic map, rotational symmetry, hyperbolic space.}

\begin{abstract}
In this paper, we prove an existence and uniqueness theorem for orientation-reversing harmonic diffeomorphisms from $\mathbb{H}_*^n$ to $\mathbb{R}_*^n$ with rotational symmetry, which is a generalization of the corresponding result for dimension $2$.
\end{abstract}
\maketitle\markboth{Shi-Zhong Du $\&$ Xu-Qian Fan}{Existence and uniqueness result for harmonic diffeomorphism}

\section{Introduction}
From the results in \cite{ta2,rr,cl,cdf}, we know that there is no rotationally symmetric harmonic diffeomorphism between the model spaces $\mathbb{R}^n$ and $\mathbb{H}^n$. Even from $\mathbb{R}_*^n$ to $\mathbb{H}_*^n$,  this is also true \cite{df}. But conversely, from $\mathbb{D}^*$ to $\mathbb{C}^*$, it does not hold \cite{cdf2}, although Heinz \cite{hz} obtained the nonexistence of harmonic diffeomorphism from the
unit disc onto the complex plane. In this paper, we generalize the result \cite{cdf2} to general dimension, to find a rotationally symmetric harmonic diffeomorphism from $\mathbb{H}_*^n$ to $\mathbb{R}_*^n$,  and to prove that this map is unique up to a combination of dilation and rotation of $\mathbb{R}^n$. All of these is related to the question mentioned by  Schoen \cite{rs}, which is about
the existence, or nonexistence, of a harmonic diffeomorphism from the complex plane onto the  hyperbolic unit disc. This question has been extenively studied by many people, see for example \cite{wt,httw,atw,mc,cr,wt2,lr} and the references therein. Partial results are related to the Nitsche's type inequalities, see for example  \cite{n,hz,iko,kd4} and the references therein.

As in \cite{rr,cl}, let us denote
\begin{equation*}
\begin{split}
\mathbb{R}^n&=(\mathbb{S}^{n-1}\times [0,\infty),r^2d\theta^2+dr^2) \text{ and }\\
\mathbb{H}^n&=(\mathbb{S}^{n-1}\times [0,\infty),(f(r))^2 d\theta^2+dr^2),
\end{split}
\end{equation*}
where $f(r)=\sinh r $, $(\mathbb{S}^{n-1},d\theta^2)$ is the $(n-1)$-dimensional sphere, and denote
$$\mathbb{R}_*^n=\mathbb{R}^n\setminus \{0\} \textrm{ and }  \mathbb{H}_*^n=\mathbb{H}^n\setminus \{0\}.$$
These notations are applicable for the whole notes.

We prove first the existence and uniqueness of the following linear ordinary differential equation with the boundary conditions.
\begin{lemm}\label{im1}
For $n\geq 2$, every solution $y(r)$ to the following equation
\begin{equation}\label{e1.2}
       y''+(n-1)\frac{f'}{f}\cdot y'-(n-1)\frac{y}{f^2}=0 \text{ for }  r>0
\end{equation}
satisfying the boundary conditions
\begin{equation}\label{e1.2b}
        \lim_{r\to 0^+}y(r)=+\infty, \lim_{r\to +\infty } y(r)=0 \text{ and } y'<0
\end{equation}
is of the form $y=c\sinh^{1-n}r$ for some positive constant $c$.
\end{lemm}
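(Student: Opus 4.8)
The plan is to treat \eqref{e1.2} as what it is, a homogeneous linear second-order ODE on $(0,\infty)$ whose solution space is two-dimensional. Since the boundary conditions \eqref{e1.2b} single out a particular ray in that space, the whole argument reduces to (i) producing an explicit basis of solutions and (ii) reading off which linear combinations survive the three constraints imposed at the two endpoints.

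First I would exhibit one solution directly. Writing $f=\sinh r$ and using the identities $f''=f$ and $(f')^2=1+f^2$, a short substitution shows that $y_1:=f^{1-n}=\sinh^{1-n}r$ solves \eqref{e1.2}; this is exactly the candidate in the statement. To obtain a second, linearly independent solution I would use reduction of order. The coefficient of $y'$ is $P(r)=(n-1)f'/f$, so the Wronskian is $W(r)=\exp\!\big(-\int P\big)=(\sinh r)^{1-n}$ up to a constant, and the standard formula $y_2=y_1\int W/y_1^2\,dr$ collapses to
\begin{equation*}
y_2(r)=(\sinh r)^{1-n}\int_0^r(\sinh s)^{n-1}\,ds,
\end{equation*}
the lower limit $0$ being admissible because $n\ge 2$ makes the integrand integrable there.

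Next I would extract the endpoint behaviour of this basis. Near the origin $y_1=\sinh^{1-n}r\to+\infty$, while applying L'H\^opital's rule to the quotient $y_2=\big(\int_0^r(\sinh s)^{n-1}ds\big)/(\sinh r)^{n-1}$ gives $\lim_{r\to0^+}y_2=\lim_{r\to0^+}\tfrac{\tanh r}{n-1}=0$. At infinity $y_1\to 0$, whereas the same quotient, now of type $\infty/\infty$, yields the crucial \emph{nonzero} limit $\lim_{r\to\infty}y_2=\lim_{r\to\infty}\tfrac{\tanh r}{n-1}=\tfrac{1}{n-1}$. Writing the general solution as $y=c_1y_1+c_2y_2$, the first boundary condition $y(0^+)=+\infty$ forces $c_1>0$ (the term $c_2y_2$ stays bounded), and the second condition $y(\infty)=0$ forces $c_2=0$, since $y_2$ does not decay. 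Hence $y=c\,\sinh^{1-n}r$ with $c=c_1>0$, and a one-line check that $y'=c(1-n)(\sinh r)^{-n}\cosh r<0$ confirms that the sign condition $y'<0$ is automatically met.

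I expect the only real obstacle to be the asymptotics of $y_2$, since $\int_0^r(\sinh s)^{n-1}\,ds$ has no elementary closed form for general $n$. The point is that one never needs this antiderivative explicitly: both limits come out cleanly by L'H\^opital, and it is precisely the nonvanishing of $\lim_{r\to\infty}y_2=\tfrac{1}{n-1}$ that kills the second solution and thereby pins down the uniqueness.
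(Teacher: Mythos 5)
Your proposal is correct, and it takes a genuinely different --- and considerably shorter --- route than the paper. The authors state explicitly that for general $n\ge 2$ they ``did not get a solution which is linearly independent to $\overline{y}=\sinh^{1-n}r$,'' and for that reason they abandon the linear structure entirely: they pass to the Riccati substitution $x=y'/y$, $z=fx$, prove a lower bound $z\ge fR_1$ and an upper bound $z\le Z_1$ via connectedness/comparison arguments, deduce $\lim_{r\to 0^+}z=1-n$, show by induction that \emph{all} derivatives of $w=z-\overline{z}$ vanish at $0$, and finally use a Gronwall-type integral estimate plus ODE uniqueness to force $w\equiv 0$. Your observation is that the missing second solution need not be in closed form: reduction of order gives
\begin{equation*}
y_2(r)=(\sinh r)^{1-n}\int_0^r(\sinh s)^{n-1}\,ds,
\end{equation*}
and the integral representation is all one needs, since L'H\^opital's rule yields $y_2\to 0$ as $r\to 0^+$ and $y_2\to \tfrac{1}{n-1}\neq 0$ as $r\to\infty$; the two limit conditions then kill $c_2$ and force $c_1>0$ in $y=c_1y_1+c_2y_2$. (Indeed your $y_2$ generalizes exactly the $n=2$ solution $\coth r$ that the paper cites from Cheung--Law, since for $n=2$ one gets $y_2=(\cosh r-1)/\sinh r=\coth r-\sinh^{-1}r$.) Your argument is elementary, self-contained, and even shows the hypothesis $y'<0$ in \eqref{e1.2b} is redundant --- the two limits alone pin down the solution, with the sign of $y'$ following automatically. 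What the paper's heavier Riccati machinery buys is qualitative information about solutions (monotonicity of $x$ and $z$, boundary behavior of $z$) of the kind used in their related nonexistence papers, but for the lemma as stated your linear-algebra-of-the-solution-space approach is cleaner and fully rigorous: the coefficients are smooth on $(0,\infty)$, so the solution space is exactly two-dimensional, and $y_1,y_2$ are independent since $y_2/y_1$ is nonconstant.
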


 From this lemma, we can get the following result.
\begin{theo}\label{ithm1}
For $n\geq 2$, there is an orientation-reversing harmonic diffeomorphism from $\mathbb{H}_*^n$ to $\mathbb{R}_*^n$, moreover, it is unique up to a combination of dilation and rotation of $\mathbb{R}^n$.
\end{theo}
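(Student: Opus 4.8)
The plan is to reduce the problem to the radial equation of Lemma~\ref{im1} by exploiting rotational symmetry. A rotationally symmetric map $u\colon\mathbb{H}_*^n\to\mathbb{R}_*^n$ carries geodesic spheres to geodesic spheres, so in the product coordinates $(\theta,r)\in\mathbb{S}^{n-1}\times(0,\infty)$ it has the form $u(\theta,r)=(\psi(\theta),\rho(r))$, where $\psi$ is an isometry of $(\mathbb{S}^{n-1},d\theta^2)$ and $\rho\colon(0,\infty)\to(0,\infty)$. First I would compute the tension field of such a map between the warped products $dr^2+f(r)^2d\theta^2$ and $d\tilde r^2+\tilde r^2 d\theta^2$. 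Because $\psi$ is an isometry of the round sphere it is a harmonic map, so the spherical component of the tension field vanishes automatically and the equation decouples, leaving the radial component
\begin{equation*}
\rho''+(n-1)\frac{f'}{f}\,\rho'-(n-1)\frac{\rho}{f^2}=0,
\end{equation*}
which is exactly \eqref{e1.2} with $y=\rho$. The key structural point is that the target $\mathbb{R}^n$ is flat, so its warping factor $\tilde r$ enters linearly and the governing ODE is the \emph{linear} equation treated in Lemma~\ref{im1}.

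Next I would read off the boundary conditions from the requirement that $u$ be an orientation-reversing diffeomorphism. For $u$ to be a bijection of $\mathbb{S}^{n-1}\times(0,\infty)$ onto itself, $\rho$ must be a monotone bijection of $(0,\infty)$ onto $(0,\infty)$; orientation reversal forces the radial direction to flip, i.e.\ $\rho'<0$, so $\rho$ is strictly decreasing with $\lim_{r\to0^+}\rho(r)=+\infty$ and $\lim_{r\to+\infty}\rho(r)=0$. These are precisely the conditions \eqref{e1.2b}. Applying Lemma~\ref{im1} then yields $\rho(r)=c\sinh^{1-n}r$ for some constant $c>0$, so any map of the desired type is pinned down up to the choice of $c$ and of $\psi$.

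For existence I would verify directly that $u_c(\theta,r)=(\theta,c\sinh^{1-n}r)$ works: it is smooth on $\mathbb{S}^{n-1}\times(0,\infty)$, solves \eqref{e1.2}, and since $1-n<0$ one has $\rho\to+\infty$ as $r\to0^+$, $\rho\to0$ as $r\to+\infty$, and $\rho'=c(1-n)\sinh^{-n}r\cosh r<0$. Hence $\rho$ is a diffeomorphism of $(0,\infty)$ onto itself, $u_c$ is a harmonic diffeomorphism of $\mathbb{H}_*^n$ onto $\mathbb{R}_*^n$, and its Jacobian in these coordinates is $\mathrm{diag}(\rho',1,\dots,1)$ with negative determinant, so $u_c$ is orientation-reversing. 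For uniqueness I would observe that the two remaining degrees of freedom match the allowed normalizations exactly: the constant $c$ is a dilation of the target (the scaling $\tilde r\mapsto\lambda\tilde r$ multiplies $\rho$ by $\lambda$), while the spherical isometry $\psi$, equivalently post-composition with an isometry of $\mathbb{R}^n$, is a rotation; both operations preserve harmonicity and the rotationally symmetric form. Thus every such map is obtained from $u_1$ by a dilation and a rotation, which is the asserted uniqueness.

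I expect the main obstacle to be the first step: carefully deriving the tension field for maps between these two warped products and confirming that it genuinely splits into the (automatically satisfied) sphere-isometry condition and precisely the radial equation \eqref{e1.2}, rather than picking up extra coupling terms. Alongside this, one must handle the orientation bookkeeping and the limiting behaviour at $r\to0^+$ and $r\to+\infty$ with enough care to see that being an orientation-reversing diffeomorphism forces exactly the sign $\rho'<0$ and the boundary values in \eqref{e1.2b}; once that translation is made, Lemma~\ref{im1} does the remaining work.
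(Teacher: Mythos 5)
Your proposal is correct and follows essentially the same route as the paper: reduce a rotationally symmetric map to the radial ODE \eqref{e1.2}, observe that the orientation-reversing diffeomorphism property forces exactly the boundary conditions \eqref{e1.2b}, and then invoke Lemma \ref{im1} for uniqueness together with the explicit solution $c\sinh^{1-n}r$ for existence. The only difference is cosmetic: where you propose to derive the tension-field decoupling by hand, the paper simply cites equation (1.2) of Cheung--Law \cite{cl} for the reduction to the ODE.
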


 This paper is organized as follows. In Section 2, we will prove Lemma \ref{im1}.
Theorem \ref{ithm1} will be proved in Section 3.
\section*{Acknowledgments}
 The author(XQ) would like to thank Prof. Luen-fai Tam for his providing useful advice on this topic about ten years ago.

\section{Proof of Lemma \ref{im1}}
Noting that from \cite[page 12]{cl},  one can see that $y=\tanh^{-1}r$ is another solution to equation \eqref{e1.2}  for dimension $2$, which is linearly independent to the solution $\overline{y}=\sinh^{-1}r$. From this fact, one can check that Lemma \ref{im1} holds easily. But for general dimension $n\geq 2$, we did not get a solution which is linearly independent to the solution $\overline{y}=\sinh^{1-n}r$, so we need to use  boundary condition \eqref{e1.2b} to get the uniqueness.

Since $y(r)>0$ for $r>0$, divided by $y$ in \eqref{e1.2}, we can get
  $$
   \frac{y''}{y}+(n-1)\frac{f'}{f}\cdot\frac{y'}{y}-(n-1)\frac{1}{f^2}=0.
  $$
Setting
$$x=\frac{y'}{y} \text{ and } z=f\cdot x,$$ we have
$x'=\frac{y''}{y}-\frac{y'^2}{y^2}$ and $z'=f\cdot x'+f'\cdot x.$ Consequently,
equation \eqref{e1.2} can be rewritten as
  \begin{equation}\label{e2.3}
      x'+x^2+(n-1)\frac{f'}{f}\cdot x-(n-1)\frac{1}{f^2}=0,
  \end{equation}
and then
   \begin{equation}\label{e2.13}
     f\cdot z'=-z^2-(n-2)f'\cdot z+(n-1).
   \end{equation}
Since $\overline{y}=\sinh^{1-n}r$ is a solution to \eqref{e1.2} under condition \eqref{e1.2b}, we can see that $\overline{z}$ is a solution to \eqref{e2.13}, where
$\overline{z}=(1-n)\cosh r.$

    Let us study the property of the solution $z$ to \eqref{e2.13}.
 \begin{lemm}\label{p2.2}
  If $y$ is a solution to \eqref{e1.2} under condition \eqref{e1.2b}, then $z(r)$ is the solution of \eqref{e2.13} and
   $$\lim_{r\to0^+}z(r)=1-n.  $$
 \end{lemm}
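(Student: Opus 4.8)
The strategy is to exploit the explicit solution $\overline z=(1-n)\cosh r$ of \eqref{e2.13} and to control the difference $w:=z-\overline z$, using the standard fact that the quadratic nonlinearity of a Riccati equation linearizes once one subtracts two of its solutions. The only information I would draw from the boundary conditions \eqref{e1.2b} is the sign of $z$: since $y$ decreases from $+\infty$ to $0$ we have $y>0$ and $y'<0$ on $(0,\infty)$, while $f=\sinh r>0$ there, so $z=f\,y'/y<0$ for every $r>0$.

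The key step is to derive and simplify the equation for $w$. Subtracting the equation \eqref{e2.13} satisfied by $\overline z$ from that satisfied by $z$ and factoring $z^2-\overline z^2=(z+\overline z)(z-\overline z)$ yields the linear first-order equation
\[
 f\,w'=-\bigl[(z+\overline z)+(n-2)f'\bigr]\,w .
\]
Here the bracket collapses: with $\overline z=(1-n)\cosh r$ and $f'=\cosh r$ one gets $(z+\overline z)+(n-2)f'=z+[(1-n)+(n-2)]\cosh r=z-\cosh r$, hence
\[
 \frac{d}{dr}\log|w|=\frac{\cosh r-z}{\sinh r}.
\]
Because $z<0$, the right-hand side is positive and bounded below by $1/\sinh r$.

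It then remains to integrate. Fixing $r_1>0$ and integrating from $r$ to $r_1$,
\[
 \log\frac{|w(r_1)|}{|w(r)|}=\int_r^{r_1}\frac{\cosh t-z(t)}{\sinh t}\,dt\ \ge\ \int_r^{r_1}\frac{dt}{\sinh t}\ \longrightarrow\ +\infty \quad (r\to0^+),
\]
since $\int_{0^+}dt/\sinh t$ diverges. Hence $w(r)\to0$, and therefore $\lim_{r\to0^+}z(r)=\lim_{r\to0^+}\overline z(r)=1-n$ (the case $w\equiv0$ being trivial). I expect the genuine difficulty to lie in the passage to a limit at the singular endpoint $r=0$: a priori $z$ might oscillate, diverge, or converge to the wrong root $1$ rather than $1-n$ of the quadratic $L^2+(n-2)L-(n-1)=0$, whose two roots are the only finite limits compatible with \eqref{e2.13}. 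The Riccati-to-linear reduction together with the single sign condition $z<0$ is exactly what excludes these possibilities, and spotting the cancellation $(z+\overline z)+(n-2)f'=z-\cosh r$ is the crux of the argument.
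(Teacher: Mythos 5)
Your proof is correct, and it takes a genuinely different and substantially shorter route than the paper's. The paper proves Lemma \ref{p2.2} by trapping $z$ between two barriers: a lower bound $z\geq fR_1$ (Lemma \ref{l2.1}) and an upper bound $z\leq Z_1$ (Lemma \ref{l2.2}), each established by a lengthy contradiction argument (the latter split into the cases $n=2$ and $n\geq 3$), and then pins the limit at $Z_1(0)=1-n$ by a final contradiction which, like your last step, exploits the divergence of $\int_{0^+}dr/f$. You instead pass immediately to the linearized equation for $w=z-\overline z$, namely $fw'=(f'-z)\,w$; this is precisely the paper's equation \eqref{e1.6}, but the paper derives it only \emph{after} Lemma \ref{p2.2} is known, using $\lim_{r\to0^+}(f'-z)=n$ to get the sharp rates of Lemma \ref{l1.2}, whereas you observe that the crude bound $f'-z>\cosh r\geq1$, which needs nothing beyond $z<0$, already forces $\log|w(r)|\to-\infty$ as $r\to0^+$. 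This is more economical and in fact proves slightly more: you never use $\lim_{r\to0^+}y=+\infty$ or the decay at infinity (beyond securing $y>0$, $y'<0$), so your argument shows every positive decreasing solution has $z\to1-n$; this is consistent with, e.g., $y=\coth r$ for $n=2$, where $z=-1/\cosh r$. What the paper's longer route buys is auxiliary structure (the explicit bounds $fR_1\leq z\leq Z_1$ and the monotonicity of Corollary \ref{c2.2}), though nothing downstream uses more than the lemma itself and the sign of $z$. One small point to make your write-up fully rigorous: integrating $\frac{d}{dr}\log|w|$ presupposes that $w$ has no zeros on the interval of integration, so you should state explicitly that, by uniqueness for the linear equation $fw'=(f'-z)w$, either $w\equiv0$ or $w$ never vanishes, which makes your dichotomy exhaustive.
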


 The proof of this result will  appear in the later part of this section.

 \begin{coro}\label{l1.1}
   Suppose $z(r)$ is the same as in Proposition \ref{p2.2}, then
   we can get
      $$
        \lim_{r\to0^+}z^{(2k)}(r)=1-n
      $$
   and
      $$
       \lim_{r\to0^+} z^{(2k+1)}(r)=0
      $$
   for all $k=0, 1, 2, \cdots$.
\end{coro}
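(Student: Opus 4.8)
The plan is to read the statement as a comparison with the explicit solution $\overline{z}=(1-n)\cosh r$ of \eqref{e2.13}. Since $\overline{z}^{(2k)}(0)=1-n$ and $\overline{z}^{(2k+1)}(0)=0$, the corollary is exactly the assertion that $z$ and $\overline{z}$ have the same one-sided derivatives at $r=0$ to all orders. The natural elementary route is strong induction on the order $m$: differentiate \eqref{e2.13} $m$ times by the Leibniz rule, use that $\sinh^{(j)}(0)$ and $\cosh^{(j)}(0)$ lie in $\{0,1\}$ according to the parity of $j$, and let $r\to0^+$. Because the highest term on the left carries the factor $f(0)=\sinh0=0$, the derivative $z^{(m+1)}(0)$ drops out, and a short count shows that after collecting terms $z^{(m)}(0)$ appears with net coefficient $(m-n)$.

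For $m\neq n$ this determines $z^{(m)}(0)$ from strictly lower derivatives. The base cases are $z(0^+)=1-n$ from Lemma~\ref{p2.2}, while the first differentiation of \eqref{e2.13} sent to $r\to0^+$ forces $(n-1)z'(0)=0$, i.e. $z'(0)=0$; the inductive step together with a parity bookkeeping then reproduces the claimed values $z^{(2k)}(0)=1-n$, $z^{(2k+1)}(0)=0$ in the admissible range. The same mechanism is seen more conceptually by setting $w:=z-\overline{z}$: subtracting the two copies of \eqref{e2.13} collapses the quadratic and linear terms into a difference of squares, giving the first-order \emph{linear homogeneous} equation
\begin{equation*}
f\,w'=-\big[(z+\overline{z})+(n-2)f'\big]\,w .
\end{equation*}
Using $z(0^+)=1-n$, $f'\to1$ and $f\sim r$, the coefficient $\big[(z+\overline{z})+(n-2)f'\big]/f$ is asymptotic to $-n/r$, so $w$ vanishes to order $n$ at the origin, which is just the matching of all derivatives of order below $n$.

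I expect two obstacles, the second being the essential one. First, the finiteness of these one-sided derivatives at the singular endpoint $r=0$ is not automatic from continuity alone and must be secured by a bootstrap, upgrading $z(0^+)=1-n$ to enough decay of $z-(1-n)$ to run the limits. The genuine difficulty, however, is the resonance at $m=n$, where the coefficient $(m-n)$ vanishes: the recursion there only certifies a consistency identity and leaves $z^{(n)}(0)$ free. Equivalently, $w$ solves a first-order linear equation, hence $w\sim C\,r^{n}$, and all further derivatives are determined by the single constant $C$; the corollary is thus equivalent to $C=0$, i.e. $w\equiv0$. This freedom is precisely the reflection of the two indicial exponents $1-n$ and $1$ of the underlying equation \eqref{e1.2}, and local data at $0$ cannot by itself select $C=0$. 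I anticipate that forcing $C=0$ requires transporting the remaining global hypotheses of \eqref{e1.2b}, namely $\lim_{r\to\infty}y=0$ together with $y'<0$, back to the origin in order to exclude the $r^{1}$-branch of \eqref{e1.2}; carrying out this exclusion is the delicate step and is the part most closely tied to the substance of Lemma~\ref{im1} itself.
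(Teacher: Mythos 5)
Your induction scheme is precisely the one the paper uses to prove this corollary: differentiate \eqref{e2.13} $m$ times, let $r\to0^+$ (where $f^{(j)}(0)\in\{0,1\}$ kills the top derivative $z^{(m+1)}$), and solve for $z^{(m)}(0)$ in terms of lower-order data. Your coefficient count is also right: the differentiated equation reads $(m-n)\,z^{(m)}(0)=\Phi\bigl(z(0),\dots,z^{(m-1)}(0)\bigr)$, so the induction determines $z^{(m)}(0)$ only when $m\neq n$. The resonance at $m=n$ that you flag is therefore a genuine gap --- and it is a gap in the \emph{paper's} proof, which asserts that taking $2k$ derivatives ``gives'' $z^{(2k)}(0)=1-n$ at every order: at $2k=n$ (for $n$ even; at $2k+1=n$ for $n$ odd) the equation degenerates to $0=0$, and the binomial identities invoked there only confirm that the values $1-n,\,0$ are \emph{consistent} with the recursion (automatic, since $\overline z$ solves \eqref{e2.13}), not that they are forced by it. Your indicial-exponent reasoning also correctly predicts that this cannot be patched locally: for $n=2$ the solution $y=\coth r$ of \eqref{e1.2} satisfies $y\to+\infty$ at $0$ and $y'<0$, fails only $\lim_{r\to\infty}y=0$, and its $z$-function $z=-1/\cosh r$ solves \eqref{e2.13} with $z(0^+)=-1$ and $z'(0^+)=0$ but $z''(0^+)=+1\neq-1$; in general dimension the difference $w=z-\overline z$ for a mixed solution $c_1y_1+c_2y_2$ vanishes to order exactly $n$. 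So no argument using only \eqref{e2.13} and data at $r=0^+$ can prove the corollary, exactly as you say.

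Where your proposal falls short is that it stops at announcing that the hypotheses at infinity must be ``transported back'': that step, which you yourself call the delicate one, is the whole content, and it can in fact be carried out with material the paper already has. Writing $z=\overline z+w$ in \eqref{e2.13} gives the Bernoulli equation $f\,w'=(n\cosh r-w)\,w$, whose solutions with $w(0^+)=0$ are $w\equiv0$ and
\begin{equation*}
w(r)=\frac{\sinh^{n}r}{K+\int_0^r\sinh^{n-1}\tau\,d\tau},\qquad K\in\mathbb{R}\setminus\{0\}.
\end{equation*}
If $K<0$, then $w\to-\infty$ at the finite radius where the denominator vanishes, which is impossible because $z=f\,y'/y$ is finite on all of $(0,\infty)$ (or by the lower bound of Lemma \ref{l2.1}). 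If $K>0$, then $w\sim(n-1)\sinh r$ as $r\to\infty$, hence $z=\overline z+w\to0^-$, contradicting Lemma \ref{l2.2}, which gives $z(r)\le Z_1(r)\le 1-n\le-1$ for all $r$ --- and Lemma \ref{l2.2} is exactly where $\lim_{r\to\infty}y=0$ and $y'<0$ enter, as you anticipated. Hence $w\equiv0$, which proves the corollary and in fact yields Lemma \ref{im1} outright, making Lemma \ref{l1.2} superfluous. In short: your diagnosis is correct and exposes a real flaw in the paper's argument, but to count as a proof your proposal needs this (or an equivalent) exclusion argument actually carried out rather than anticipated.
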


\noindent
\begin{proof}
 For simplicity, let us denote $z^{(j)}(0)$ as $\lim_{r\to0^+} z^{(j)}(r)$ for $j=0, 1, \cdots.$ From Proposition \ref{p2.2}, we know that the conclusion is true for $z(0)$. We want to show that $z'(0)=0$.  Taking derivative on both sides of \eqref{e2.13}, by elementary computation, we can get
$$z'(0)=nz'(0),$$
which implies $z'(0)=0$.

Suppose Corollary \ref{l1.1} is true for $k-1$ where $k\geq 1$, we need to show that it is true for $k$. Taking $2k$ derivative on both sides of \eqref{e2.13} and using the facts
$$f^{(2i)}(0)=0, f^{(2i+1)}(0)=1 \text{ and } $$
$$C_{2i+1}^0+C_{2i+1}^2+\cdots+C_{2i+1}^{2i}=2^{2i} \text{ for } i\geq 0\text{ with }$$
$$C_{2s}^0+C_{2s}^2+\cdots+C_{2s}^{2s}=2^{2s-1} \text{ for } s\geq 1,$$
 we can get $z^{(2k)}(0)=1-n$. Similarly, we can prove $z^{(2k+1)}(0)=0$.

 By induction, the corollary holds.
\end{proof}

Now we can prove the following estimation of two solutions to \eqref{e2.13}.
\begin{lemm}\label{l1.2}
 Suppose $z(r)$ is a solution of \eqref{e2.13} and $w=z-\overline{z}$, where $\overline{z}(r)=-(n-1)\cosh
r$, then there exists a positive constant $\delta$ such that
   \begin{equation}\label{e1.5}
      \frac{w(r_0)}{r_0^{n-1}}r^{n-1}\leq w(r)\leq
      \frac{w(r_0)}{r_0^{n+1}}r^{n+1}
   \end{equation}
   for    $0<r_0<r<\delta$.
\end{lemm}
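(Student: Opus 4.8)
The plan is to reduce the two-sided bound \eqref{e1.5} to the statement that $w/r^{n-1}$ is nondecreasing and $w/r^{n+1}$ is nonincreasing near $0$, and to read off these monotonicities from a first-order equation for $w$. First I would substitute $z=\overline z+w$ into \eqref{e2.13} and subtract the identity $f\overline z'=-\overline z^2-(n-2)f'\overline z+(n-1)$ satisfied by $\overline z$; this kills the terms depending on $\overline z$ alone and leaves $f w'=-w^2-2\overline z w-(n-2)f'w$. Inserting $\overline z=-(n-1)f'$ and using $f''=f$ for $f=\sinh r$ gives the Bernoulli-type equation
\[
f\,w' = n f' w - w^2,
\qquad\text{equivalently}\qquad
w' = \frac{w}{f}\,(n f' - w).
\]
Since the solutions relevant to Lemma \ref{p2.2} satisfy $z(0)=1-n$, we have $w(0)=0$; on the interval where $w\not\equiv0$ the equation forbids $w$ from vanishing, so $w$ has a fixed sign, and I will work on a right-neighborhood of $0$ on which $w>0$.

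Next I would form the logarithmic derivatives of the two ratios. Dividing the equation by $w$ gives $w'/w=nf'/f-w/f$, hence
\[
\frac{d}{dr}\log\frac{w}{r^{n-1}} = \left(n\frac{f'}{f} - \frac{n-1}{r}\right) - \frac{w}{f},
\qquad
\frac{d}{dr}\log\frac{w}{r^{n+1}} = \left(n\frac{f'}{f} - \frac{n+1}{r}\right) - \frac{w}{f}.
\]
The left inequality in \eqref{e1.5} is exactly the assertion that the first ratio is nondecreasing, and the right inequality that the second is nonincreasing. So it suffices to show that the first bracketed quantity dominates $w/f$, and the second is dominated by it, for all sufficiently small $r$.

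The crux is the behavior of $nf'/f=n\coth r$ at the singular endpoint. Using $\coth r=r^{-1}+r/3+O(r^3)$ one finds
\[
n\frac{f'}{f} - \frac{n-1}{r} = \frac{1}{r} + O(r)
\quad\text{and}\quad
n\frac{f'}{f} - \frac{n+1}{r} = -\frac{1}{r} + O(r),
\]
as $r\to0^+$, while $w/f=w/\sinh r\to w'(0)$ stays bounded (in fact tends to $0$, as $w(0)=0$). Thus the first logarithmic derivative is $\geq 0$ and the second is $\leq 0$ once $r$ is small, which fixes a $\delta>0$; integrating each from $r_0$ to $r$ over $0<r_0<r<\delta$ and exponentiating yields precisely the two estimates of \eqref{e1.5}.

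The main obstacle I anticipate is the analysis at $r=0$: one must control $w/f$ \emph{uniformly} on the whole interval $(0,\delta)$ against the blow-up $\pm1/r$ of the geometric term, and justify the positivity of $w$. Both are settled by integrating the Bernoulli equation explicitly: setting $v=1/w$ turns it into the linear equation $v'+n(f'/f)v=1/f$ with integrating factor $f^n$, whence
\[
w(r) = \frac{f^n}{\,C + \int_0^r f^{\,n-1}\,ds\,}, \qquad C>0,
\]
the constant $C>0$ corresponding exactly to the boundary-condition solutions with $w(0)=0$. This gives $w\sim r^n/C$ and therefore $w/f=O(r^{\,n-1})$, so the comparison against $1/r$ is clear-cut and the choice of $\delta$ is uniform.
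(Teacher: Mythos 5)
Your proposal is correct and takes essentially the same route as the paper: there, too, the two Riccati equations are subtracted to give the first-order equation $fw'=aw$ with $a=f'-z=nf'-w$, the coefficient is squeezed as $(n-1)/\tau\le a(\tau)/f(\tau)\le (n+1)/\tau$ for small $\tau$ (using $z\to 1-n$, i.e.\ $w\to 0$, from Lemma \ref{p2.2}), and integration of the resulting separable equation from $r_0$ to $r$ yields \eqref{e1.5} --- which is exactly your statement that $w/r^{n-1}$ is nondecreasing and $w/r^{n+1}$ is nonincreasing. Your explicit integration of the Bernoulli equation is a harmless extra rather than a necessity, since $w/f\to 0$ already follows from Lemma \ref{p2.2} together with $f(r)\sim r$; the only caveat is that both you and the paper tacitly use $w\ge 0$ near $0$, and for your formula this means excluding $C<0$ (which is also compatible with $w(0)=0$) by appealing to global existence of $z$ on $(0,\infty)$ rather than to the boundary value alone.
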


\noindent
\begin{proof}
Since $z$ and $\overline{z}$ are two
solutions of \eqref{e2.13} and $w=z-\overline{z}$, we have
  \begin{equation}\label{e1.6}
       fw'=aw,
  \end{equation}
where
   $$
     a(r)=-[z+\overline{z}+(n-2)f']=-z(r)+f'(r)>0.
   $$
Solving the separable equation \eqref{e1.6}, we can get
   \begin{equation}\label{e1.7}
     w(r)=w(r_0)e^{\int^r_{r_0}\frac{a(\tau)}{f(\tau)}d\tau}.
   \end{equation}
Noting that
   $$
     \lim_{r\to0^+}a(r)=n
   $$
and
   $$
     \lim_{r\to0^+}f(r)/r=1,
   $$
we can get
   $$
    \lim_{r\to0^+}\Big(\frac{a(\tau)}{f(\tau)}\Big)\Big/\Big(\frac{n}{\tau}\Big)=1.
   $$
So there exists a positive constant $\delta>0$, such that for
$0<r_0<\tau<r<\delta$, there holds
   $$
    \frac{n-1}{\tau}\leq\frac{a(\tau)}{f(\tau)}\leq\frac{n+1}{\tau}.
   $$
Substituting into \eqref{e1.7}, we can get \eqref{e1.5}. The
conclusion is drawn.
\end{proof}

We are ready to prove Lemma \ref{im1}.
\begin{proof}[Proof of Lemma \ref{im1}]
As mentioned above, we know that
   $$
    \overline{y}(r)=\sinh^{1-n}r
   $$
is a solution to \eqref{e1.2} satisfying condition \eqref{e1.2b}. If $y$ is also a solution to \eqref{e1.2} and \eqref{e1.2b}, then  Corollary \ref{l1.1} guarantees $\lim_{r\to0^+}w^{(j)}(r)=0$ for $j=0, 1, \cdots.$
   So one can get for any
$\alpha>0$,
   $$
     \lim_{r\to0^+}\frac{w(r)}{r^\alpha}=0.
   $$
Taking $r_0\to0^+$ in Lemma \ref{l1.2}, we can get
   $$
     w(r)=0 \text{ for } 0<r<\delta.
   $$
Then the uniqueness theorem of O.D.E. imples
   $$
     w(r)=0 \text{ for }  r>0.
   $$
That is to say,
$$(\ln \overline{y})'=(\ln y)' \text{ for }  r>0.$$
So $y=c \overline{y}$ for some constant $c>0$. Hence the conclusion is drawn.
\end{proof}

In the rest of this section, we want to prove Lemma \ref{p2.2}. The idea is simple: We  find the lower bound of $z$ first, then get the upper bound, and finally,  compute the limit at $0$.

Now let us estimate the lower bound of $z$.
 For each $r>0$, let us consider an quadratic function
     \begin{eqnarray}\label{defQ}
       Q(x)&=& x^2+(n-1)\frac{f'}{f}\cdot
       x-(n-1)\frac{1}{f^2}
     \end{eqnarray}
 in $x$. Clearly, equation \eqref{e2.3} can be rewritten by
    \begin{equation}\label{e3.17}
      x'=-Q(x).
    \end{equation}
    and the roots of $Q(x)=0$ are given by
     $$
        R_1(r)=\frac{-(n-1)f'-\sqrt{(n-1)^2f'^2+4(n-1)}}{2f}<0
     $$
 and
     $$
        R_2(r)=\frac{-(n-1)f'+\sqrt{(n-1)^2f'^2+4(n-1)}}{2f}>0.
     $$

 We will show that a lower bound for $x$ is $R_1$, that is, $z\geq fR_1$. More precisely, we have
\begin{lemm}\label{l2.1}
 If $y(r)$ is a solution to \eqref{e1.2}\eqref{e1.2b}, then we can get
      $$
       0> x(r)\geq R_1(r)\ \ \mbox{ for all } r>0,
      $$
  or equivalently,
      $$
       Q(x(r))\leq0\ \ \mbox{ for all } r>0.
      $$
      Hence $x(r)$ is increasing for $r>0$ and
      $$\lim_{r\to0^+} x(r)=-\infty.$$
\end{lemm}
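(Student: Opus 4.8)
The plan is to prove Lemma~\ref{l2.1} by establishing the invariance of the region $\{x : Q(x)\le 0\}$ under the backward flow of \eqref{e3.17}, then deduce monotonicity and the limit at the origin from that invariance together with the boundary conditions \eqref{e1.2b}.

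First I would record the elementary facts about the roots. Since $Q$ is a monic quadratic in $x$ with roots $R_1(r)<0<R_2(r)$, we have $Q(x)\le 0$ precisely when $R_1(r)\le x\le R_2(r)$; equivalently $Q(x)>0$ when $x<R_1$ or $x>R_2$. Because $y>0$ and $y'<0$ on $(0,\infty)$ by \eqref{e1.2b}, we have $x=y'/y<0$ throughout, so it suffices to prove the single-sided bound $x\ge R_1$.

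The heart of the argument is a barrier/invariance claim: I would show that the set where $x(r)\ge R_1(r)$ is forward-invariant, or rather that $x$ cannot cross below $R_1$. Suppose for contradiction that $x(r_1)<R_1(r_1)$ at some $r_1>0$. Since $x<0$ everywhere and (as one checks) $R_1(r)\to 0^-$ as $r\to\infty$ while $x$ must approach $0$ from below in a controlled way by the asymptotics of $\bar y$, one is forced to examine the last time $r_*<r_1$ at which $x$ meets the barrier, $x(r_*)=R_1(r_*)$, with $x<R_1$ on $(r_*,r_1]$. At such a crossing point I would compare the derivative of $x$ with the derivative of the barrier curve $R_1$: from \eqref{e3.17} we have $x'(r_*)=-Q(x(r_*))=-Q(R_1(r_*))=0$, whereas a direct computation shows $R_1'(r_*)>0$ (here one uses $f'=\cosh r>0$ and that $R_1$ is increasing toward $0$). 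Comparing $x'(r_*)=0$ with $R_1'(r_*)>0$ gives that immediately past $r_*$ the solution $x$ lies above $R_1$, contradicting the assumption that $x<R_1$ on the right of $r_*$. Hence $x\ge R_1$ for all $r>0$, which is exactly $Q(x)\le 0$.

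Given $Q(x)\le 0$, monotonicity is immediate: $x'=-Q(x)\ge 0$, so $x$ is increasing on $(0,\infty)$. For the final limit $\lim_{r\to0^+}x(r)=-\infty$, I would argue that since $x$ is increasing and negative, the limit $L:=\lim_{r\to 0^+}x(r)\in[-\infty,0)$ exists; if $L$ were finite then $y'/y\to L$ would force $y$ to stay bounded near $0$ (integrating $(\ln y)'=x$ over a small interval keeps $\ln y$ finite), contradicting $\lim_{r\to0^+}y(r)=+\infty$ from \eqref{e1.2b}. Therefore $L=-\infty$. I expect the main obstacle to be the crossing-point comparison: one must verify carefully that $R_1'(r)>0$ (equivalently that $R_1$ increases from $-\infty$ at $0$ toward $0$ at infinity) and handle the boundary behavior so that a first violation of the barrier genuinely yields the sign contradiction; the monotonicity of $x$ and the blow-up at $0$ then follow with little effort.
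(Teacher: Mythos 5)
Your core barrier step contains a fatal sign error, and in fact the strategy it rests on cannot work. At your putative last crossing point $r_*$ (where $x(r_*)=R_1(r_*)$ and $x<R_1$ on $(r_*,r_1]$) you correctly compute $x'(r_*)=-Q(R_1(r_*))=0$ and $R_1'(r_*)>0$, but these inequalities give $(x-R_1)'(r_*)=-R_1'(r_*)<0$, so immediately past $r_*$ the solution lies \emph{below} the barrier, not above it. This is perfectly consistent with your standing assumption $x<R_1$ on $(r_*,r_1]$, so no contradiction arises and the argument proves nothing. The deeper point is that the region $\{x\geq R_1\}$ is genuinely \emph{not} forward-invariant for \eqref{e3.17}: on its lower boundary the vector field vanishes while the barrier $R_1$ strictly rises, so trajectories are pushed out of the region. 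No local comparison at a crossing point can establish the lemma. There are also two secondary problems: the existence of the crossing point $r_*$ is unjustified, since both $x$ (which has $\liminf_{r\to0^+}x=-\infty$ by \eqref{e1.2b}) and $R_1$ tend to $-\infty$ as $r\to0^+$, so $x$ could lie below $R_1$ on all of $(0,r_1]$; and your asymptotic claims are wrong or circular --- $R_1(r)\to-(n-1)$, not $0^-$, as $r\to\infty$, and you may not assume $x=y'/y$ behaves like $\overline{y}'/\overline{y}$, since that is essentially what is being proved.

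The paper's proof goes in exactly the opposite direction, and the difference is essential. It shows that the \emph{complementary} region is trapping: if $x(\overline{r})<R_1(\overline{r})$, then $Q(x)>0$ there, so by \eqref{e2.3} $x$ is strictly decreasing while $R_1$ is non-decreasing; hence the gap $x-R_1\leq-\delta<0$ persists for all $r>\overline{r}$ (this is the open-closed/connectedness argument). The contradiction is then \emph{global}, not local: the persistent gap together with $x-R_2<0$ gives $x'\leq\delta x$, hence $x(r)\leq-C_0e^{\delta r}$; for large $r$ one has $Q(x)\geq\frac{1}{2}x^2$, so the Riccati inequality $x'\leq-\frac{1}{2}x^2$ forces $x$ to reach $-\infty$ in finite time, contradicting that $x=y'/y$ is defined on all of $(0,+\infty)$. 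If you want to salvage your write-up, this finite-time blow-up mechanism is the missing idea you must add; your concluding steps (monotonicity of $x$ from $Q(x)\leq0$, and $\lim_{r\to0^+}x(r)=-\infty$ from $\ln y\to+\infty$) are correct and agree with the paper.
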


\begin{proof} The idea of the proof  is similar to that used in Lemma 2.1 \cite{df}. Assume on the contrary, there exists
$\overline{r}>0$ such that
   $$
     x(\overline{r})<R_1(\overline{r}).
   $$
Setting
   $$
    \Sigma=\{\omega\in(\overline{r},+\infty):  x(r)<R_1(r) \mbox{ holds
    true for all } \overline{r}<r<\omega\},
   $$
it is clear that $\Sigma$ is a closed set in
$(\overline{r},+\infty)$. We shall prove that $\Sigma$ is also
a relative open set in $(\overline{r},+\infty)$ to yield
   $$
     \Sigma=(\overline{r},+\infty)
   $$
by connection of $(\overline{r},+\infty)$. In fact, letting
$\omega_0\in\Sigma$, we have
   $$
     x(r)<R_1(r)
   $$
holds for all $r\in(\overline{r},\omega_0)$. So
   $$
     Q(x(r))>0\ \ \mbox{ for all } r\in (\overline{r},\omega_0).
   $$
Using \eqref{e2.3}, we have $x(r)$ is a strictly monotone decreasing
function in $r\in(\overline{r},\omega_0)$. On the other hand, noting that
$R_1(r)$ is monotone non-decreasing in
$r\in(\overline{r},\omega_0)$, we have
   $$
     x(\omega_0)-R_1(\omega_0)< x(\overline{r})-R_1(\overline{r})=-\delta<0
   $$
 for some positive number $\delta$. By continuity, we have
$\omega_0$ is an interior point of $\Sigma$. So $\Sigma$ is also
relative open in $(\overline{r},+\infty)$. Hence
   $$
     \Sigma=(\overline{r},+\infty).
   $$
Consequently,
   $$
    Q(x(r))>0
   $$
for all $r>\overline{r}$. As a result, $x(r)$ is a strictly monotone
decreasing function in $r\in(\overline{r},+\infty)$. In addition,
by the monotonicity of $R_1(r)$, we have
   \begin{equation}\label{e3.18}
      x(r)-R_1(r)<x(\overline{r})-R_1(\overline{r})=-\delta<0
   \end{equation}
for all $r>\overline{r}$. Using \eqref{e2.3} and the fact $x-R_2<0$, we can get
   \begin{eqnarray}\label{e3.19}\nonumber
      x'&=&-Q(x)\\ \nonumber
        &=&-[x(r)-R_1(r)][x(r)-R_2(r)]\\ \nonumber
        &\leq&\delta[x(r)-R_2(r)]\\
        &\leq&\delta x(r)
   \end{eqnarray}
   for $r>\overline{r}$.
So
    $$
     [e^{-\delta r}x(r)]'\leq0 \text{ for } r>\overline{r}.
    $$
Consequently,
    \begin{equation}\label{e2.20}
     x(r)\leq-C_0e^{\delta r}
    \end{equation}
for some constant $C_0>0$ and $r>\overline{r}$.

  Since $f'/f\to 1$ and $f^{-2}\to 0$ as $r\to+\infty$, by \eqref{e2.3} and \eqref{defQ}, we
  can get
    $$
     Q(x(r))\geq\frac{1}{2}x^2(r)
    $$
  for $r>M$, where $M>\overline{r}$ is a large number. As a result,
     \begin{equation}\label{e2.21}
       x'\leq-\frac{1}{2}x^2 \text{ for } r>M.
     \end{equation}
  Consequently,
     $$
      -(x^{-1})'\leq-\frac{1}{2} \text{ for } r>M.
     $$
  After integrating over $r>M$, we get
     $$
      x(r)\leq\frac{1}{r-M+x^{-1}(M)}\to-\infty
     $$
  as $r\to (M-x^{-1}(M))$. This contradicts the fact that $x(r)$ is well-defined in $(0,+\infty)$. Hence for $r>0$, we have
  $$0>x(r)\geq R_1(r).$$
  From these inequalities, one can get $Q(x)\leq 0$, so $x$ is increasing for $r>0$.
 In addition, condition \eqref{e1.2b} implies  $\ln y(r)\to+\infty$
as $r\to 0^+$, so we can get
    \begin{equation*}
         \liminf_{r\to0^+}x(r)=-\infty.
    \end{equation*}
 Hence  $\lim_{r\to0^+}x(r)=-\infty.$     Therefore the conclusion of the lemma is drawn.
  \end{proof}

   Now we want to get the upper bound for $z(r)$.
\begin{lemm}\label{l2.2}
If $y(r)$ is a solution of \eqref{e1.2}\eqref{e1.2b}, then we can obtain
      $$
        z(r)\leq Z_1
      $$
 for all $r>0$, where
   \begin{eqnarray*}
     Z_1&=&\frac{-(n-2)f'-\sqrt{(n-2)^2f'^2+4(n-1)}}{2}<0
   \end{eqnarray*}
and
   \begin{eqnarray*}
     Z_2&=&\frac{-(n-2)f'+\sqrt{(n-2)^2f'^2+4(n-1)}}{2}>0
   \end{eqnarray*}
are roots of quadratic form
   $$
     \widetilde{Q}(z)= z^2+(n-2)f'\cdot z-(n-1).
   $$
\end{lemm}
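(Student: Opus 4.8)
The plan is to argue by contradiction, reading \eqref{e2.13} as the phase-line relation $f z'=-\widetilde{Q}(z)$, so that the sign of $z'$ is governed by the position of $z$ relative to the roots $Z_1<0<Z_2$. From Lemma \ref{l2.1} we already know $z=fx<0$ for all $r>0$; since $\widetilde{Q}(0)=-(n-1)<0$, the value $0$ lies strictly between the roots, so whenever $Z_1<z<Z_2$ we have $\widetilde{Q}(z)<0$ and hence $z'>0$. The first step is therefore to suppose, contrary to the claim, that $z(\overline{r})>Z_1(\overline{r})$ at some $\overline{r}>0$, and to show that then $z(r)>Z_1(r)$ persists for all $r\ge\overline{r}$.

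For this forward invariance I would not track $z$ against the moving barrier $Z_1$ pointwise, but instead set $w=z-Z_1$ and derive a linear differential inequality for it. Factoring $\widetilde{Q}(z)=(z-Z_1)(z-Z_2)=w\,(z-Z_2)$ and differentiating, \eqref{e2.13} gives
\[
 w'=-\frac{\widetilde{Q}(z)}{f}-Z_1'=\frac{Z_2-z}{f}\,w-Z_1'.
\]
Because $z<0<Z_2$ the coefficient $b:=(Z_2-z)/f$ is positive, and a direct computation shows $Z_1$ is nonincreasing (it is the constant $-1$ when $n=2$), so $-Z_1'\ge0$. Hence $\big(w\,e^{-\int_{\overline{r}}^{r}b}\big)'=-Z_1'\,e^{-\int_{\overline{r}}^{r}b}\ge0$, and since $w(\overline{r})>0$ the product $w\,e^{-\int_{\overline{r}}^{r}b}$ is nondecreasing and stays positive; thus $w>0$, i.e. $Z_1<z<Z_2$, for every $r\ge\overline{r}$.

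It remains to turn this into a contradiction. On $[\overline{r},\infty)$ we now have $Z_1<z<0$, so $\widetilde{Q}(z)<0$ and $z'>0$: the function $z$ is increasing and bounded above by $0$, hence converges to a finite limit as $r\to\infty$ and is, in particular, bounded on $[\overline{r},\infty)$. Returning to $x=z/f=(\ln y)'$, a bound $|z|\le C$ gives $|(\ln y)'|\le C/\sinh r$, which is integrable near $+\infty$; therefore $\ln y$ has a finite limit and $y$ tends to a strictly positive constant. This contradicts $\lim_{r\to+\infty}y(r)=0$ from \eqref{e1.2b}, which forces $z(r)\le Z_1(r)$ for all $r>0$.

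I expect the genuinely delicate point to be this final contradiction rather than the invariance. The boundedness of $z$ alone does not contradict the equation when $n=2$ (there $z\equiv-1$ is an honest bounded solution of \eqref{e2.13}), so the decay hypothesis $y\to0$ in \eqref{e1.2b} must be used in an essential way, and it is exactly what makes the argument uniform in the dimension. One should also double-check the elementary sign facts underpinning the monotonicity scheme, namely that $Z_1$ is nonincreasing and that $Z_2-z>0$ throughout, together with the input $z<0$ from Lemma \ref{l2.1}.
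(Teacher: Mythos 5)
Your proof is correct, and it differs from the paper's own argument in both of its steps, in ways worth recording. For the forward-invariance step, the paper fixes $\tilde{r}$ with $z(\tilde{r})>Z_1(\tilde{r})$ and runs a connectedness (open--closed) argument on the set $\Sigma=\{\omega: z>Z_1 \mbox{ on } (\tilde{r},\omega)\}$, exploiting that $z$ is increasing there while $Z_1$ is non-increasing; your integrating-factor identity $\bigl(w\,e^{-\int_{\overline{r}}^{r}b}\bigr)'=-Z_1'\,e^{-\int_{\overline{r}}^{r}b}\ge 0$ for $w=z-Z_1$ proves the same persistence more directly (note that the positivity of $b$, which you emphasize, is never actually needed for this). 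The substantial difference is the contradiction at infinity. The paper splits into two cases: for $n=2$ it argues exactly as you do --- $z>-1$ gives $(\ln y)'>-1/f$, and $\int^{\infty}dr/\sinh r<\infty$ forces $\lim_{r\to+\infty}y>0$, contradicting \eqref{e1.2b} --- but for $n\ge3$ it instead runs a two-stage bootstrap from \eqref{e2.13}: boundedness $0>z\ge-\beta$ yields $(f^{n-2}z)'\ge-\overline{\beta}f^{n-3}$, hence $z\to0^-$ by L'Hospital, hence $(f^{n-2}z)'\ge\bigl(n-\frac{3}{2}\bigr)f^{n-3}$ for large $r$, which eventually forces $z>0$, contradicting $z<0$ without ever re-using the decay of $y$. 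Your observation that invariance plus monotonicity makes $z$ bounded on $[\overline{r},\infty)$ for every $n\ge2$, so that the $n=2$ integrability argument applies verbatim in all dimensions, removes both the case distinction and the bootstrap; the price is that your contradiction leans on the boundary condition $y\to0$ in every dimension, whereas the paper's higher-dimensional case shows that the ODE together with $z<0$ alone already rules out $z>Z_1$. Both arguments are complete; yours is shorter and uniform in $n$, and your closing remark correctly identifies why the decay hypothesis is indispensable when $n=2$ (where $z\equiv-1$ is a genuine bounded solution of \eqref{e2.13}).
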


\begin{proof}
Similar to the proof of Lemma \ref{l2.1}. Assume on the contrary, there exists
$\tilde{r}\in(0,+\infty)$ such that
    $$
      z(\tilde{r})>Z_1.
    $$
Setting
    $$
     \Sigma=\{\omega\in(\tilde{r},+\infty):  z(r)>Z_1 \mbox{ for all }
     r\in(\tilde{r},\omega)\},
    $$
we want to show that $\Sigma=(\tilde{r},+\infty)$. In fact,
$\Sigma\not=\emptyset$ by continuity. It's also clearly that
$\Sigma$ is a closed subset in $(\tilde{r},+\infty)$. We remains
to show that $\Sigma$ is also relative open in
$(\tilde{r},+\infty)$. Actually, for $\omega_0\in\Sigma$, we have
$z(r)$ is a strictly monotone increasing function in
$r\in(\tilde{r},\omega_0)$ by equation \eqref{e2.13}. On the other hand,
since $Z_1(r)$ is a monotone non-increasing function in
$r\in(\tilde{r},\omega_0)$, we have
   $$
    0>z(\omega_0)>Z_1(\omega_0).
   $$
Consequently, $\omega_0$ is an interior point of $\Sigma$. Hence
$\Sigma=(\tilde{r},+\infty)$.

Now we divide this problem into two cases.

Case one: $n=2$. In this case $Z_1=-1$, so $z(r)>-1$ for $r>\tilde{r}$. Since $z=fx$, one have
$x=zf^{-1}$. So
$$(\ln y)'=zf^{-1}>-f^{-1}$$ for $r>\tilde{r}$. Hence
$$y(r)\geq y(\tilde{r})e^{-\int_{\tilde{r}}^\infty f^{-1}(t)dt}.$$
From this, we can get $\lim_{r\to +\infty}y(r)>0$. This contradicts the boundary condition $\lim_{r\to +\infty}y(r)=0$.

Case two: $n\geq 3$.
  Using equation \eqref{e2.13}, we have $z(r)$ is strictly monotone
  decreasing function in $r\in(0,\tilde{r})$. So
     \begin{equation}\label{e3.25}
         0>z(r)\geq-\beta \text{ for } r>\tilde{r}
     \end{equation}
  for some constant $\beta>0$. As a result,
     \begin{equation}\label{e3.26}
       -z^2(r)+n-1\geq-\beta^2+n-1\equiv-\overline{\beta}.
     \end{equation}
  So it follows from equation \eqref{e2.13} that
     $$
      fz'\geq-(n-2)f'\cdot z-\overline{\beta},
     $$
  or equivalent
     $$
       (f^{n-2}z)'\geq-\overline{\beta}f^{n-3}
     $$
  for all $r>\tilde{r}$. Consequently,
      $$
       f^{n-2}(r)z(r)\geq-\overline{\beta}\int^r_{\tilde{r}}f^{n-3}(\tau)d\tau+f^{n-2}(\tilde{r})z(\tilde{r}),
      $$
  or
      \begin{eqnarray}\label{e3.27}\nonumber
         0>z(r)&\geq&-\overline{\beta}\frac{\int^r_{\tilde{r}}f^{n-3}(\tau)d\tau}{f^{n-2}(r)}
         +\frac{f^{n-2}(\tilde{r})z(\tilde{r})}{f^{n-2}(r)}\to
           0^{-}
      \end{eqnarray}
  as $r\to+\infty$, where we have used
     $$
      \lim_{r\to+\infty}f'(r)=+\infty
     $$
  and L' Hospital's rule to get
     \begin{eqnarray*}
       \lim_{r\to+\infty}\frac{\int^r_{\tilde{r}}f^{n-3}(\tau)d\tau}{f^{n-2}(r)}&=&\lim_{r\to+\infty}
       \frac{f^{n-3}(r)}{(n-2)f^{n-3}(r)f'(r)}\\
       &=&\lim_{r\to+\infty}\frac{1}{(n-2)f'(r)}=0.
     \end{eqnarray*}
 Using \eqref{e3.27} and equation \eqref{e2.13}, we have
    $$
     fz'\geq-(n-2)f'z+\lf(n-\frac{3}{2}\ri),
    $$
 or
    \begin{equation}\label{e3.28}
      (f^{n-2}z)'\geq\lf(n-\frac{3}{2}\ri)f^{n-3}
    \end{equation}
  for $r>K$, $K$ large enough. Since $n\geq3$ and
$\lim_{r\to+\infty}f(r)=+\infty$, integrating over $r>K$, we can get
 \begin{equation}
 \begin{split}
      f^{n-2}(r)z(r)&\geq\lf(n-\frac{3}{2}\ri)\int^r_{K}f^{n-3}
      (\tau)d\tau+f^{n-2}(K)z(K)\\
      &\geq\lf(n-\frac{5}{3}\ri)\int^r_{K}f^{n-3}(\tau)d\tau
 \end{split}
 \end{equation}
for $r>M'$, $M'>K$ large enough. Consequently,
    $$
      z(r)\geq\lf(n-\frac{5}{3}\ri)\frac{\int^r_{\tilde{r}}f^{n-3}(\tau)d\tau}{f^{n-2}(r)}>0
    $$
 provided $r>M'$. This contradicts the assumption $z<0$.

 Combining above results, the lemma is proved.
\end{proof}

\noindent
\begin{coro}\label{c2.2}
 Let $z$ be the same as in Lemma \ref{l2.2}, then   $z(r)$
   is a monotone non-increasing function for $r>0$,.
\end{coro}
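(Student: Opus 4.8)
The plan is to read the monotonicity directly off the Riccati-type equation \eqref{e2.13} once the upper bound from Lemma \ref{l2.2} is in hand. Recall that \eqref{e2.13} can be written as $f\cdot z'=-\widetilde{Q}(z)$, where $\widetilde{Q}(z)=z^2+(n-2)f'\cdot z-(n-1)$ is, for each fixed $r>0$, an upward-opening quadratic in $z$ whose two roots are precisely $Z_1(r)<0<Z_2(r)$. Since the leading coefficient is positive, one has $\widetilde{Q}(z)\geq 0$ exactly when $z\leq Z_1(r)$ or $z\geq Z_2(r)$, and $\widetilde{Q}(z)<0$ strictly between the roots.

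The key input is Lemma \ref{l2.2}, which furnishes $z(r)\leq Z_1(r)$ for all $r>0$. Thus at every point $r$ the value $z(r)$ lies at or to the left of the smaller root of $\widetilde{Q}$, so that $\widetilde{Q}(z(r))\geq 0$. I would emphasize here that although $Z_1$ depends on $r$ through $f'=\cosh r$, the inequality $\widetilde{Q}(z(r))\geq 0$ is a pointwise statement in which $\widetilde{Q}$ is evaluated at the same value of $r$ as $z$; hence no further care about the $r$-dependence of the roots is required.

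Combining these two facts, for every $r>0$ I obtain
$$
z'(r)=-\frac{\widetilde{Q}(z(r))}{f(r)}\leq 0,
$$
since $f(r)=\sinh r>0$ for $r>0$. Therefore $z$ is monotone non-increasing on $(0,+\infty)$, which is the assertion of the corollary. There is essentially no obstacle here beyond correctly identifying the sign of $\widetilde{Q}$ on the region $\{z\leq Z_1\}$: the entire content of the monotonicity is already encoded in the upper bound $z\leq Z_1$ established in Lemma \ref{l2.2}, and the corollary is simply its immediate consequence via \eqref{e2.13}.
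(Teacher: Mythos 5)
Your proof is correct and follows essentially the same route as the paper's: both use the upper bound $z\leq Z_1$ from Lemma \ref{l2.2} to conclude that $\widetilde{Q}(z(r))=(z-Z_1)(z-Z_2)\geq 0$ pointwise, hence $fz'=-\widetilde{Q}(z)\leq 0$ and $z'\leq 0$ since $f>0$. Your remark that the $r$-dependence of the roots is irrelevant to the pointwise sign argument is a fair clarification, but there is no substantive difference from the paper's argument.
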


\begin{proof} Noting that $Z_1(r)$ is the smaller root of
quadratic form $\widetilde{Q}(z)$ and \eqref{e2.13} can be rewritten
by
  \begin{equation}\label{e2.19}
      fz'=-\widetilde{Q}(z)=-(z-Z_1)(z-Z_2)\leq0,
  \end{equation}
so $z(r)$ is a monotone non-increasing function in $r>0$.
\end{proof}

Now let us prove Lemma \ref{p2.2}.
\begin{proof}[Proof of Lemma \ref{p2.2}]
By Lemma \ref{l2.1} and Lemma \ref{l2.2}, we can get
   $$
     f\cdot R_1(r)\leq z(r)\leq Z_1(r).
   $$
Passing to the limits, we can get
  \begin{equation}\label{e2.20}
     f R_1|_{r\to0^+}\leq\lim_{r\to0^+}z(r) \leq Z_1(0)=1-n,
  \end{equation}
where
  $$
     f\cdot R_1|_{r\to0^+}=-\frac{(n-1)f'(0)+\sqrt{(n-1)^2f'^2(0)+4(n-1)}}{2}.
  $$

  We need to show that $\lim_{r\to0^+}z(r) = Z_1(0)$. Assuming on the contrary, by \eqref{e2.20}, we have
   $$
    \lim_{r\to0^+}z(r)<Z_1(0).
   $$
By continuity, there exist small constants $r_0>0$ and $\kappa>0$
such that
  \begin{equation}
     z(r)<Z_1(r)-\kappa
  \end{equation}
for all $0<r<r_0$. Substituting into \eqref{e2.19}, we get
   \begin{equation}\label{e2.21}
    z'(r)\leq\frac{-\kappa'}{f(r)}
   \end{equation}
for some positive constant $\kappa'$. Noting that there exists a positive constant $C$ such that
  $$
    0<f(r)\leq Cr
  $$
for all $0<r<r_0$, after integrating \eqref{e2.21}, we can get
   $$
   0>z(r)\geq
    z(r_0)+\kappa'\int^{r_0}_r\frac{1}{f(\tau)}d\tau\to+\infty
   $$
as $r\to 0^+$. This is impossible. Hence the lemma is proved.
\end{proof}

\section{Proof of Theorem \ref{ithm1}}

\begin{proof}[Proof of Theorem \ref{ithm1}]
 If $u$ is a rotationally symmetric harmonic map from $\mathbb{H}_*^n$ onto $\mathbb{R}_*^n$, then we can assume $u(r,\theta)=(y(r),\theta)$ up to a rotation of $\mathbb{R}^n$. By (1.2) in \cite{cl} for example, $y(r)$ should satisfy the equation \eqref{e1.2}. Furthermore, if $u$ is an orientation-reversing diffeomorphism, then condition  \eqref{e1.2b} is satisfied.

By Lemma \ref{im1}, equation \eqref{e1.2} with \eqref{e1.2b} has a unique solution up to a dilation. Hence the theorem holds.
\end{proof}

\end{document}